\documentclass[english,12pt]{smfart}
  \usepackage[utf8]{inputenc}
  \usepackage[T1]{fontenc}

  \usepackage{amssymb,stmaryrd}
  \usepackage{smfthm} 
  \usepackage{graphicx}
  \usepackage{color}
  \usepackage{mathrsfs}

\newcommand{\cw}{\mathop {\mathrm{cw}}}

\newcommand{\mR}{\mathbb{R}}

\newtheorem*{theo*}{Theorem}
\newtheorem*{coro*}{Corollary}

\author{Beno\^{\i}t Kloeckner}
\title{Cutwidth and degeneracy of graphs}

\begin{document}

\begin{abstract}
We prove an inequality involving the degeneracy, the cutwidth and the sparsity of
graphs. It implies a quadratic lower bound on the cutwidth in terms of the
degeneracy for all graphs and an improvement of it for clique-free graphs.
\end{abstract}

\maketitle

\section{Introduction}

The starting point of the author's interest in cutwidth is a lecture by
Misha Gromov at the ``Glimpses of Geometry'' conference held in Lyon in May
2008. During this lecture, Gromov introduced a concept similar to
cutwidth in the realm of topology of manifolds. Although this paper
will stick to combinatorics of graphs, let us give an idea of this important 
motivation to this work.

Gromov's question was the following: given a manifold $X$
and a continuous map $F:X\to\mathbb{R}$, how can one relate the topological
complexity of $X$ to the maximum topological complexity of the level sets of $F$ ?
It turns out that the answer depends heavily on the dimension of $X$. If $X$
is an orientable surface of arbitrarily high genus, it is always possible
to design $F$ so that its levels are a point, a circle, a couple of circles, a figure eight
or empty (see figure \ref{fig:surface}). 
The complexity of $X$ is therefore not bounded by the complexity of level sets of $F$.
The picture gets different in some higher dimensions but it is not our purpose to
detail this here.

This question also makes sense for polyhedron, and  in \cite{Gromov} Gromov proves several results
in this setting. That paper raises many questions that could be of great interest
to combinatoricians interested in cutwidth.

\begin{figure}[htp]\begin{center}
\includegraphics{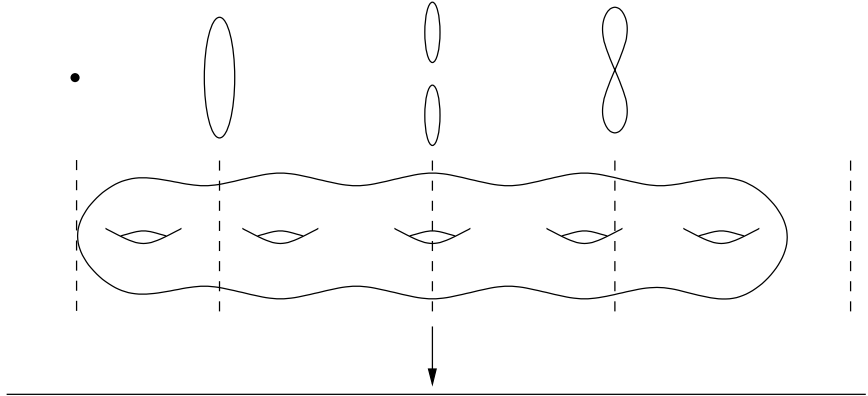}
\caption{A map from a surface of arbitrarily high genus to the line,
 with level sets of bounded complexity.}\label{fig:surface}
\end{center}\end{figure}

\subsection{Cutwidth}
The complexity of a level set makes sense in graphs: if a simple graph $G$ is identified with its topological
realization, one can consider the infimum, over all continuous maps $f:G\to\mR$, of the maximal 
multiplicity (that is, number of inverse images) of points of $\mR$. This turns out to be
exactly the \emph{cutwidth} of $G$, usually defined as follows.

Given a linear ordering $\mathcal{O}=(x_1<\cdots<x_n)$ of the vertices, one defines the
cutwidth of the ordering as
$$\cw(G,\mathcal{O})=\max_{1\leqslant i\leqslant n}\#\{(uv)\in E\,|\,u\leqslant x_i<v\}$$
Then, the cutwidth of $G$ is defined as
$$\cw(G)=\min_{\mathcal{O}} \cw(G,\mathcal{O})$$
where $\mathcal{O}$ runs over all linear orderings of $V$.

Now, given an ordering 
$\mathcal{O}$ of $V$ that is optimal with respect to cutwidth, an optimal continuous map is obtained
by mapping each vertex to the integer corresponding to its rank, and mapping edges monotonically between their
endpoints. Its maximal multiplicity is then the cutwidth of the ordering.
Conversely, it can be shown that any continuous map $G\to\mR$ can be modified into a map
that is one-to-one on $V$ and monotonic on edges without raising its maximal multiplicity.

To show the relevance of this point of view, let us give a very short proof of
the main result of \cite{Chavez-Trapp} (this is independent of the rest of the article).
One defines the \emph{circular cutwidth} of a graph
in the same way than its cutwidth, but replacing the line by a circle (the combinatorial 
definition is in terms of a cyclic ordering of the vertices, plus data determining for each edge
whether it is drawn clockwise or counterclockwise).
\begin{theo}
The circular cutwidth of a tree is equal to its cutwidth.
\end{theo}

\begin{proof}
First, since the line topologically embeds in the plane, it is clear that
the circular cutwidth of any graph cannot exceed its cutwidth.

Consider an optimal continuous map $f:T\to \mathbb{S}^1$ where $T$ is (the topological
realization of) a tree. Let $e:\mathbb{R}\to\mathbb{S}^1$ be the universal covering map.
The path lifting property shows, since $T$ has no cycle, that there is a continuous map 
$\tilde f:T\to \mathbb{R}$ such that $e\circ \tilde f=f$. The inverse image by $\tilde f$
of a point $x$ is contained in the inverse image by $f$ of $e(x)$, so that the cutwidth
of $T$ cannot exceed its circular cutwidth.
\end{proof}

\subsection{Degeneracy}
What we need next is to define the complexity of a graph. There are many invariants that
can play this r{\^o}le; here we use the \emph{degeneracy}, a montonic version
of the minimal degree, defined as follows.

Given an integer $k$, the $k$-core $G_k$ of $G$ is the subgraph obtained by recursively
pruning the vertices of degree strictly less than $k$. The degeneracy $\delta(G)$
of the graph is the largest $k$ such that $G_k$ is not empty. A graph with big degeneracy
is in some sense thick.

One of the features of degeneracy is that it is an upper bound for the chromatic and 
list-chromatic numbers:
$$\chi(G)\leqslant\chi_\ell(G)\leqslant\delta(G)+1.$$
The proof of this is classical, see for example the chapter on five-coloring of planar graphs in
\cite{Aigner-Ziegler}.

\subsection{Sparsity}

In order to get a more interesting inequality, we need to involve another invariant
of graphs. We shall use a uniform variant of sparsity, which can be controlled
for clique-free graphs, and will enable us to deduce some kind of expanding property
for $G$.

A graph $G$ on $n$ vertices is said to be $\lambda$\emph{-sparse}
(where $\lambda\geqslant 1)$ if it has at most $n(n-1)/(2\lambda)$ edges.  We shall say
that $G$ is \emph{$(\rho,\lambda)$-uniformly sparse} if all subgraphs of $G$ that
contain at least $\rho n$ vertices are $\lambda$-sparse.

Note that we cannot ask for sparsity of all subgraphs of $G$, since a subgraph
consisting of two adjacent vertices is not $\lambda$-sparse for any $\lambda>1$.

\section{A quadratic inequality}

\subsection{The main result}
We start by a general lower bound on the cutwidth of a graph in terms of
its degeneracy and uniform sparsity.

\begin{theo}\label{theo:main}
For all $(\rho,\lambda)$-uniformly sparse graph $G$ on $n$ vertices we have
\begin{equation}
\cw(G)\geqslant\lceil\rho n\rceil\left(\delta(G)-\frac{\lceil\rho n\rceil-1}{\lambda}\right).\label{eq:main2}
\end{equation}
Moreover if $2n\rho \leqslant\delta(G)\lambda-1$ then
\begin{equation}
\cw(G)>\frac{(\delta(G)\lambda+1)^2}{4\lambda}-\frac1{\lambda}.
\label{eq:main}
\end{equation}
\end{theo}
It may seem strange that in \eqref{eq:main} the cutwidth is bounded from below by an \emph{increasing} function
of the sparsity; this simply translates the fact that when (uniform) sparsity increases,
the degeneracy decreases more than the cutwidth does.

Note that in some classes of graphs, the possibility of choosing $\rho$ enables one to get
a bound that is quadratic in $\delta$ from \eqref{eq:main2} too. As a matter of fact,
\eqref{eq:main} is simply an optimization of \eqref{eq:main2} when $\rho$ can be taken small enough.
Since it can be difficult to prove $(\rho,\lambda)$-sparsity with good constants,
it is not obvious that there is a need for such a general statement. It is mainly
motivated by corollaries \ref{coro:notriangle} and \ref{coro:noclique} on clique-free graphs.

\begin{proof}
We consider a simple graph $G$ on $n$ vertices that is assumed to be $(\rho,\lambda)$-uniformly sparse.
Let $G'$ be the $\delta(G)$-core of $G$: its minimal
degree is $\delta(G)$ and since it is a subgraph of $G$, $\cw(G)\geqslant\cw(G')$.

Let $\mathcal{O}=(x_1<\cdots<x_{n'})$ be a linear ordering of the vertices of $G'$
that minimizes $\cw(G',\mathcal{O})$.
For all $i$ let $n_i=\#\{(uv)\in E'\,|\,u\leqslant x_i<v\}$ and denote by
$G(i)$ the subgraph of $G'$ induced on the vertices $\{x_1,\ldots,x_i\}$.
By assumption, for all $i\geqslant\rho n$, the graph
$G(i)$ has at most $i(i-1)/(2\lambda)$ edges. The total
sum of the degrees in $G'$ of the vertices of $G(i)$ is at least $i\delta(G)$, so that
$$n_i\geqslant i\delta(G)-\frac{i^2-i}{\lambda}.$$
If $2n\rho \leqslant\delta(G)\lambda-1$, we can evaluate this inequality at the optimal point
$i=\lfloor (\delta(G)\lambda+1)/2\rfloor$ since it satisfies $i\geqslant\rho n$.
Letting $\varepsilon=(\delta(G)\lambda+1)/2-\lfloor (\delta(G)\lambda+1)/2\rfloor$ we then get
\begin{eqnarray*}
\cw(G) &\geqslant& \left(\frac{\delta(G)\lambda+1}2-\varepsilon\right)
                   \left(\delta(G)-\frac{\left(\frac{\delta(G)\lambda+1}2-\varepsilon\right)-1}\lambda\right) \\
       &\geqslant& \frac{\delta(G)\lambda+1-2\varepsilon}{2}
                   \left(\frac{\delta(G)\lambda+2\varepsilon+1}{2\lambda}\right)\\
       &=& \frac{(\delta(G)\lambda+1)^2}{4\lambda}-\frac{\varepsilon^2}{\lambda}
\end{eqnarray*}
which gives the desired inequality since $\varepsilon<1$.

In any case, we can consider the point $i=\lceil\rho n\rceil$ and get
\eqref{eq:main2}.
\end{proof}

\subsection{Application to general graphs}

If we let down any information on $G$, we get the
following.
\begin{coro}\label{coro:general}
For all simple graphs, we have
\begin{equation}
\cw(G)\geqslant \frac14 \delta(G)^2+\frac12\delta(G).
\end{equation}
\end{coro}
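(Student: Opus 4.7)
The plan is to derive the corollary as a direct specialization of inequality \eqref{eq:main2} from Theorem \ref{theo:main}, taking the trivial sparsity parameter $\lambda=1$ and then optimizing over $\rho$.

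The first step is to observe that every simple graph on $n$ vertices has at most $n(n-1)/2$ edges, hence is $1$-sparse, and that the same bound on every induced subgraph makes it $(\rho,1)$-uniformly sparse for every $\rho\in[0,1]$, with no non-trivial hypothesis needed. Inequality \eqref{eq:main2} with $\lambda=1$ is thus available and reads
$$\cw(G)\geqslant\lceil\rho n\rceil\bigl(\delta(G)-\lceil\rho n\rceil+1\bigr).$$

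The second step is to pick $\rho$ so as to maximize this right-hand side. The map $i\mapsto i(\delta(G)-i+1)$ is a concave quadratic with maximum near $i=(\delta(G)+1)/2$, so I would choose $\rho\in[0,1]$ such that $\lceil\rho n\rceil=\lceil(\delta(G)+1)/2\rceil$. A short case analysis on the parity of $\delta(G)$ then shows this yields the value $\delta(G)^2/4+\delta(G)/2$ exactly when $\delta(G)$ is even and a strictly larger value when $\delta(G)$ is odd, which is precisely the inequality claimed by the corollary.

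Since the whole argument is just an optimization of Theorem \ref{theo:main} in a trivial regime, there is no serious obstacle; the only step requiring attention is verifying that the chosen integer $\lceil(\delta(G)+1)/2\rceil$ is a legitimate index along an optimal ordering of the $\delta(G)$-core. This follows from $\delta(G)\leqslant n-1$ together with the fact that the $\delta(G)$-core has at least $\delta(G)+1$ vertices (each of its vertices has at least $\delta(G)$ neighbours inside it). The degenerate cases $\delta(G)\in\{0,1\}$, where one cannot really tune $\lceil\rho n\rceil$ to the continuous optimum, are immediate and can be dispatched directly without invoking Theorem \ref{theo:main}.
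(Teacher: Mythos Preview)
Your argument is correct and is essentially the same as the paper's proof: both specialize Theorem~\ref{theo:main} with $\lambda=1$ and then do a parity case analysis on $\delta(G)$. The only cosmetic difference is that the paper invokes the already-optimized inequality~\eqref{eq:main} with $\rho=0$, whereas you start from~\eqref{eq:main2} and carry out the optimization over $\lceil\rho n\rceil$ by hand; since \eqref{eq:main} is precisely that optimization, the two routes coincide. Your explicit check that $\lceil(\delta(G)+1)/2\rceil$ does not exceed the size of the $\delta(G)$-core is a nice point of care that the paper leaves implicit.
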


\begin{proof}
Since every graph is $(0,1)$-uniformly sparse, from \eqref{eq:main}
we deduce that $\cw(G)>(\delta(G)+1)^2/4-1$. If $\delta(G)$
is odd, then it follows that $\cw(G)\geqslant(\delta(G)+1)^2/4$ but otherwise,
writting $\delta(G)=2k$ we see that $\cw(G)>k^2+k-3/4$ so that 
$\cw(G)\geqslant k^2+k$.
\end{proof}

This is a positive answer to our version of Gromov's question:
all continuous maps from a high-complexity graph to the line
have high multiplicity. Of course, in many cases this estimate is rather poor:
for example trees have degeneracy $1$ and unbounded cutwidth (so that
there is no lower bound of $\delta(G)$ in terms of $\cw(G)$) and hypercubes
have exponential cutwidth but linear degeneracy. However it is sharp for
complete graphs, and a better bound would have to involve
more information on the graph.

As pointed out to me by professors Raspaud and Gravier, the consequence 
in terms of chromatic number is in fact easy to prove directly: 
consider an optimal coloring as a morphism $G\to K_{\chi(G)}$
(which must be onto the edge set), and use that $\cw(K_n)=\lfloor n^2/4 \rfloor$.
However Corollary \ref{coro:general} is stronger in the sense that it applies to
the degeneracy, and in particular implies a bound on the list chromatic number. 

\subsection{The case of clique-free graphs}

We shall deduce the following from Theorem \ref{theo:main}.
\begin{coro}\label{coro:notriangle}
For all simple graph $G$ without triangle, one has
\begin{equation}
\cw(G)\geqslant \frac12 {\delta(G)^2}.
\end{equation}
\end{coro}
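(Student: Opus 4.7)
The plan is to invoke Theorem~\ref{theo:main}, specifically inequality~\eqref{eq:main2}, with parameters $\rho$ and $\lambda$ tailored to the triangle-free setting. The combinatorial input will be Mantel's theorem (the triangle case of Tur\'an): any triangle-free graph on $m$ vertices has at most $m^2/4$ edges. Since being triangle-free is hereditary, this bound applies to every subgraph of $G$, which is precisely what lets us promote $G$ to a uniformly sparse graph.

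Translating Mantel into $\lambda$-sparsity, the condition $m^2/4 \leq m(m-1)/(2\lambda)$ is equivalent to $\lambda \leq 2(m-1)/m$, so the largest $\lambda$ admissible for all subgraphs of size at least $\rho n$ is $2(\rho n - 1)/(\rho n)$, which is strictly less than $2$. The right-hand side of \eqref{eq:main2} has the form $\lceil\rho n\rceil\bigl(\delta - (\lceil\rho n\rceil-1)/\lambda\bigr)$, whose naive continuous optimum is $\lceil\rho n\rceil \approx \delta\lambda/2 \approx \delta$. This motivates the choice $\rho = \delta/n$ and $\lambda = 2(\delta-1)/\delta$ (writing $\delta := \delta(G)$ and assuming $\delta \geq 2$, so that $\lambda \geq 1$). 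For these values, the bound $2(m-1)/m \geq 2(\delta-1)/\delta$ for $m \geq \delta$ confirms that $G$ is $(\rho,\lambda)$-uniformly sparse, and plugging into \eqref{eq:main2} gives, after a short calculation in which $(\delta-1)/\lambda$ collapses to $\delta/2$,
$$\cw(G) \geq \delta\left(\delta - \tfrac{\delta}{2}\right) = \tfrac{\delta^2}{2}.$$
The cases $\delta \leq 1$ are handled by hand: the inequality is trivial when $\delta = 0$, and when $\delta = 1$ the $1$-core contains an edge so $\cw(G) \geq 1$.

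The main obstacle, conceptually, is finding the right balance between $\rho$ and $\lambda$. Mantel's constant $2$ is only attained in the limit $m \to \infty$, so one cannot simply insert $\lambda = 2$ into \eqref{eq:main} and call it a day. Aligning the discrete scale $\lceil\rho n\rceil$ with $\delta$ in \eqref{eq:main2} is what exactly absorbs the loss coming from the $-1$ in $2(m-1)/m$, and it is this alignment that is responsible for the clean final constant $1/2$.
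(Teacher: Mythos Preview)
Your proof is correct and follows essentially the same route as the paper: both invoke Tur\'an/Mantel to obtain $(\rho,\lambda)$-uniform sparsity with $\lambda = 2(\rho n-1)/(\rho n)$, apply inequality~\eqref{eq:main2}, and then set $\rho n=\delta(G)$ so that the term $(\lceil\rho n\rceil-1)/\lambda$ collapses to $\delta(G)/2$. Your explicit treatment of the degenerate cases $\delta(G)\leq 1$ (where $\lambda\geq 1$ fails) is a small but welcome addition that the paper leaves implicit.
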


\begin{proof}
The main point is to show that triangle-free graphs are somewhat sparse; but
Tur{\'a}n's Theorem \cite{Turan} (see also \cite{Bollobas}) in particular gives
that any graph without triangle on $n$ vertices is $2\frac{n-1}{n}$-sparse.

Now, when all subgraphs of $G$ are triangle-free, we get that
$G$ is $(\rho,2(\rho n-1)/(\rho n))$-uniformly sparse for all $\rho$ such that
$\rho n$ is an integer.
Applying the second part of the main theorem, we get that
\begin{eqnarray*}
\cw(G) &\geqslant& \rho n\left(\delta(G)-\frac{\rho n-1}{2\frac{\rho n -1}{\rho n}}\right)\\
       &\geqslant& \rho n\left(\delta(G)-\frac12\rho n\right)\\
\end{eqnarray*}
and taking $\rho=\delta(G)/n$, the desired inequality follows.
\end{proof}

With the same argument, one can deduce the following from Tur{\'a}n's Theorem.

\begin{coro}\label{coro:noclique}
For all simple graph $G$ without subgraph isomorphic to $K_{k+1}$, one has
\begin{equation}
\cw(G)\geqslant \frac{k}{k-1}\frac{\delta(G)^2}4 -\frac{k-1}{k}.
\end{equation}
\end{coro}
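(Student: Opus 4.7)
The plan is to mimic the proof of Corollary \ref{coro:notriangle} essentially verbatim, simply replacing the triangle-free edge bound by Tur\'an's bound for $K_{k+1}$-free graphs. Tur\'an's theorem states that any $K_{k+1}$-free graph on $m$ vertices has at most $\frac{k-1}{k}\cdot\frac{m^{2}}{2}$ edges, which rewrites as the assertion that such a graph is $\frac{k(m-1)}{(k-1)m}$-sparse. Since every subgraph of $G$ is still $K_{k+1}$-free, and since the map $m\mapsto\frac{k(m-1)}{(k-1)m}=\frac{k}{k-1}(1-\frac{1}{m})$ is increasing, it follows that $G$ is $\bigl(\rho,\frac{k(\rho n-1)}{(k-1)\rho n}\bigr)$-uniformly sparse for every $\rho$ such that $\rho n$ is a positive integer.

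Next I feed this into \eqref{eq:main2}. With the above choice of $\lambda$, the quantity $(\rho n-1)/\lambda$ simplifies cleanly to $\frac{k-1}{k}\rho n$, so that
\begin{equation*}
\cw(G)\geqslant \rho n\left(\delta(G)-\frac{k-1}{k}\rho n\right).
\end{equation*}
This is a concave quadratic in the real variable $x=\rho n$, maximised at $x^{\star}=\frac{k\,\delta(G)}{2(k-1)}$, where the value equals $\frac{k}{k-1}\cdot\frac{\delta(G)^{2}}{4}$. A second-order expansion at the optimum gives exactly $f(x^{\star}-\alpha)=f(x^{\star})-\frac{k-1}{k}\alpha^{2}$.

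To conclude I restrict $\rho n$ to integer values: taking $\rho n=\lfloor x^{\star}\rfloor$ yields $\alpha\in[0,1)$ hence a loss strictly less than $\frac{k-1}{k}$, which produces the announced inequality. The only point that merits verification is that this integer lies in the admissible range $[1,n]$; this follows routinely from $\delta(G)\leqslant n-1$ together with $k\geqslant 2$ (the corner case $k=1$ forces $G$ to be edgeless and $\delta(G)=0$, making the bound trivial). I expect no substantial obstacle: the argument is a direct quantitative analogue of the triangle-free case, with Tur\'an's general statement replacing the triangle-free edge bound, and the additive correction $-\frac{k-1}{k}$ in the statement being precisely the quadratic cost of rounding $x^{\star}$ to an integer.
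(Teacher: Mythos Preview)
Your argument is correct and is precisely the ``same argument'' the paper alludes to: apply Tur\'an's bound to get $(\rho,\lambda)$-uniform sparsity with $\lambda=\frac{k(\rho n-1)}{(k-1)\rho n}$, plug into \eqref{eq:main2} to obtain $\cw(G)\geqslant \rho n\bigl(\delta(G)-\frac{k-1}{k}\rho n\bigr)$, and optimise in the integer $\rho n$ near $x^\star=\frac{k\,\delta(G)}{2(k-1)}$, the additive term $-\frac{k-1}{k}$ being exactly the quadratic rounding loss. The only cosmetic remark is that your justification ``$\lfloor x^\star\rfloor\in[1,n]$ follows from $\delta(G)\leqslant n-1$ and $k\geqslant 2$'' does not literally cover $\delta(G)=1$, $k\geqslant 3$ (where $\lfloor x^\star\rfloor=0$), but there the right-hand side of the claimed inequality is nonpositive and the statement is vacuous.
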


Let us show that this gives an asymptotically sharp result for Tur{\'a}n's graph $\mathrm{Tur}(n,k)$,
defined as the most balanced
complete $k$-partite graph on $n$ vertices. On the one hand Corollary \ref{coro:noclique} gives
\begin{equation}
\cw(\mathrm{Tur}(n,k))\geqslant \frac{k-1}{k}\frac{n^2}{4}-\frac{n}2-\frac{3k}{4(k-1)}
\end{equation}
but on the other hand, one can give an explicit ordering of vertices of $\mathrm{Tur}(n,k)$ with cutwidth
of the same order of magnitude. Indeed, the graph whose vertices
are the integers $\{1,2,\ldots,n\}$ and where two vertices $a,b$ are connected by an edge if and only
if $a\not\equiv b\mod k$ is isomorphic to $\mathrm{Tur}(n,k)$ and endowed with a natural ordering.
The number of edges that cross the vertex $i$ and are issued from any fixed vertex $j<i$ is at most 
$$n-i-\left\lceil\frac{n-i}k\right\rceil\leqslant\frac{k-1}k(n-i)+1$$
so that the total number of edges crossing $i$ is at most
$$c(i)=i\left((n-i)\frac{k-1}k+1\right).$$
Now the function $c$ takes its maximal value at $x=(n+k/(k-1))/2$ so that we get
\begin{equation}
\cw(\mathrm{Tur}(n,k))\leqslant \frac{k-1}k\frac{n^2}4+\frac{n}2+\frac{k}{4(k-1)}.
\end{equation}

This approach also applies to all solved
forbidden subgraph extremal problems, for example to $\mathrm{Tur}(rt,r)$-free
graphs \cite{Erdos-Stone}, see also \cite[Theorem VI.3.1]{Bollobas}.

\bibliographystyle{smfplain}
\bibliography{biblio.bib}

\end{document}